\documentclass[10pt]{amsart}

\usepackage{mathtools}
\usepackage{amssymb,amsfonts,amsmath}
\usepackage{hyperref}
\usepackage{amsthm}

\usepackage{booktabs}
\usepackage{bm}

\usepackage{graphicx, verbatim, centernot}
\usepackage{caption}
\usepackage{xcolor}

\usepackage{multicol}
\usepackage[shortlabels]{enumitem}

\usepackage{dsfont}

\allowdisplaybreaks
\usepackage[left=1in,right=1in,top=1in,bottom=1in]{geometry}

\usepackage[capitalize]{cleveref}

\newtheorem{theorem}{Theorem}[section]
\newtheorem*{theorem*}{Theorem}

\newtheorem{lemma}[theorem]{Lemma}

\newtheorem*{claim*}{Claim}

\newcommand{\one}{\boldsymbol{1}}

\newcommand{\E}{\mathbb{E}}

\newcommand{\R}{\mathbb{R}}

\newcommand{\Var}{{\operatorname{Var}}}

\newcommand{\sigmabf}{ \boldsymbol{\sigma} }

\newcommand{\rplus}[1]{ r_{#1}(\sigmabf) }

\newcommand{\taubf}{ \boldsymbol{\tau} }

\AtBeginDocument{\pagestyle{plain}}

\begin{document}
	
	\title{A simple proof of rapid mixing on random regular graphs beyond uniqueness}
	
	\author{Andreas G\"obel}
	\address{Hasso Plattner Institute, University of Potsdam, Germany}
	\email{andreas.goebel@hpi.de}
	\author{Matthew Jenssen}
	\address{King's College London, Department of Mathematics}
	\email{matthew.jenssen@kcl.ac.uk}
	\author{Marcus Michelen}
	\address{Northwestern University, Department of Mathematics}
	\email{michelen@northwestern.edu}
	\author{Marcus Pappik}
	\address{Hasso Plattner Institute, University of Potsdam, Germany}
	\email{marcus.pappik@hpi.de}
	\author{Will Perkins}
	\address{Georgia Institute of Technology, School of Computer Science}
	\email{math@willperkins.org}
	\author{Leon Schiller}
	\address{Hasso Plattner Institute, University of Potsdam, Germany}
	\email{leon.schiller@hpi.de}
	
	\date{}

	\begin{abstract}
		A recent breakthrough of Chen, Chen, Chen, Yin, and Zhang  shows rapid mixing for Glauber dynamics for the hard-core model on random regular graphs beyond the tree uniqueness threshold.  Their approach builds upon the literature of various local-to-global techniques and applies to a more general setting of 
		discrete distributions 
		supported on downward-closed set families.  We give a short and self-contained proof via a Bochner--Bakry--\'{E}mery approach and directly show a Poincar\'e inequality by expanding the Dirichlet form in terms of the $L^2$-norm of the generator applied to a test function and eliminating a sum of squares term.   Our proof is a streamlined version of an argument of Kondratiev, Kuna, and Ohlerich 
		used to study spatial birth-and-death dynamics for Gibbs 
		point processes in the continuum, which we adapt to the discrete setting.

	\end{abstract}
	
	\maketitle
	
	\section{Introduction}
	
	Sampling from a high-dimensional probability distribution supported on a combinatorial 
	structure (e.g. independent sets of a graph, matchings, bases of a matroid) is a central problem in statistical physics and theoretical computer science. One of the earliest and most successful techniques developed for this task is the Markov chain Monte Carlo (MCMC) method. 
	A canonical and widely studied Markov chain is the 
	\emph{Glauber dynamics} (or Gibbs sampler), in which in single coordinate is updated at each step conditioned on the current state of the remaining coordinates.  A fundamental question is 
	to determine when this Markov chain mixes rapidly.
	
	A central  example is the hard-core model on a finite graph $G$.  Fixing an activity parameter $\lambda>0$, the \emph{hard-core model} is the distribution $\mu_{G,\lambda}$ on independent sets 
	$I$ of $G$ defined by 
	\[
	\mu_{G,\lambda}(I)= \frac{\lambda^{|I|}}{Z_G(\lambda)}\, ,
	\]
	where $Z_G(\lambda)=\sum_I \lambda^{|I|}$.

	One reason the hard-core model is so central to this topic is a beautiful connection between statistical physics phase transitions, computational complexity, and rapid mixing of Markov chains.
	For the class of graphs of maximum degree $\Delta$, the worst-case computational complexity of sampling from the hard-core model is now well understood, with a sharp transition at the  threshold $\lambda_c(\Delta) = (\Delta-1)^{\Delta-1}/(\Delta-2)^\Delta \sim e/\Delta$, which marks the uniqueness/non-uniqueness phase transition of the hard-core model on the infinite $\Delta$-regular tree.  Weitz~\cite{Wei06} gave a polynomial-time  sampling algorithm for $\lambda < \lambda_c(\Delta)$ based on the method of correlation decay on computational trees; then a long line of work culminating in~\cite{ALO20, CLV23,CLV21}  showed that the 
	Glauber dynamics mixes in the optimal $O(n\log n)$ time. At $\lambda=\lambda_c(\Delta)$ the worst-case mixing time is polynomial~\cite{chen2025rapid-unique}. 
	On the other hand, when $\lambda> \lambda_c(\Delta)$, the Glauber dynamics mixes exponentially slowly in the 
	worst case such as on typical random $\Delta$-regular bipartite graphs~\cite{MWW09}; 
	Sly~\cite{Sly10} used random bipartite graphs as gadgets to prove that no efficient sampling algorithm  exists for $\lambda > \lambda_c(\Delta)$ unless $\mathsf{NP}=\mathsf{RP}$.

	Despite the worst-case slow mixing for $\lambda > \lambda_c(\Delta)$, much 
	better mixing behavior is expected for many instances, and in particular on random (not bipartite) instances. It is conjectured that the Glauber dynamics mixes rapidly  on random $\Delta$-regular graphs  all the way up to the \textit{reconstruction threshold} 
	$\lambda_r(\Delta) = \log ^{2+o(1)} (\Delta) \gg  \lambda_c(\Delta)$~\cite{brightwell2004second,bhatnagar2016decay}.   In a recent breakthrough, Chen, Chen, Chen, Yin, and Zhang~\cite{chen2025rapid} established rapid mixing 
	for $\lambda = O(1/\sqrt{\Delta})$ on random $\Delta$-regular graphs using a lower bound on the most negative graph eigenvalue, which lies significantly beyond the uniqueness threshold (but still far short of $\lambda_r$).  Their result is deduced 
	from a general criterion for rapid mixing of the Glauber dynamics of any distribution supported on a downward-closed set family: roughly, the criterion asks that a certain matrix 
	of local pairwise correlations be spectrally bounded after conditioning on any partial configuration. The proof combines several recent technical ingredients: a trickle-down theorem for field dynamics, spectral stability, and a comparison between field dynamics and Glauber dynamics.  While the bound on $\lambda$ is a factor $\Delta^{1/2 +o(1)}$ below  the reconstruction threshold, the average density of independent sets at the achieved bound is only a factor $2$ away from the density at the reconstruction threshold (roughly $\frac{1}{2} \frac{\log \Delta}{\Delta}$ versus $\frac{\log \Delta}{\Delta}$); see the remarks after~\cite[Theorem 1.1]{chen2025rapid}.

	A similar Markov chain 
	was studied more than a decade earlier by Kondratiev, Kuna, and Ohlerich~\cite{kondratiev2013spectral} in the setting of Gibbs point processes on $\mathbb{R}^d$. The relevant 
	dynamics there is a birth-and-death process (points appearing and disappearing in random locations at random times) whose rates are governed by the Papangelou intensity, the continuum analogue of the marginal ratios appearing in~\cite{chen2025rapid}. Their main result is a sufficient condition for a lower bound on the
	spectral gap of the infinitesimal generator that is structurally very similar to the criterion of~\cite{chen2025rapid}: positive-semidefiniteness of a kernel built from the Papangelou intensity, in place of the matrix built 
	from the discrete marginal ratios.

	The parallel between the two results is striking, and the purpose of this note is to make the connection precise.  
	We show that the argument of Kondratiev, Kuna, and Ohlerich adapts directly to this discrete setting, yielding a short, self-contained proof of \cite[Theorem~1.9]{chen2025rapid}: the matrix condition ensuring a spectral gap for distributions on downward-closed set families (which implies the hard-core result on random regular graphs).
	We note that under a strictly stronger matrix condition, \cite[Theorem~1.8]{chen2025rapid} gives a modified log-Sobolev inequality yielding a near-optimal mixing time, and our approach does not recover this.

	The work of Kondratiev, Kuna, and Ohlerich is inspired by a classical route from local curvature estimates to spectral gaps.
	In Riemannian geometry, Bochner's identity relates the dissipation of the
	gradient of a function to a nonnegative second-derivative term and a curvature
	term.  Bakry and \'Emery~\cite{bakry2006diffusions} later reformulated this idea in the language of
	Markov generators, turning it into a general method for proving Poincar\'e and
	log-Sobolev inequalities.  For jump processes, this philosophy was developed in a form closer to the
	present paper by Boudou, Caputo, Dai~Pra, and Posta
	\cite{boudou2006spectral}.  Kondratiev, Kuna, and Ohlerich
	\cite{kondratiev2013spectral} later built on this approach in the setting of continuum
	Gibbs point processes.  
	
	We note that the work \cite{kondratiev2013spectral} is phrased in terms of the so-called \emph{carr\'e du champ} operators $\Gamma$ and $\Gamma_2$.  They write full expansions for the quantities $\Gamma(f,f)$ and $\Gamma_2(f,f)$ and provide ``an analogue of the Bochner--Lichn\'erowicz--Weitzenb\"ock formula.'' 
	We do not introduce the carr\'{e} du champ formalism and instead work directly with the integrated forms of $\Gamma(f,f)$ and $\Gamma_2(f,f)$: the
	Dirichlet form and the variance $\Var(Lf)$ where $L$ is the generator of the birth-death dynamics. 
	We also take advantage of the recursive nature of the discrete setting: we observe that the discrete derivative can be expressed in terms of the generator of the same birth--death chain on the conditional family where a vertex has been pinned, together with an explicit interaction term measuring how pinning a vertex changes the birth rates of other elements. After integration, the conditional-generator term is nonnegative, and the remaining interaction term is precisely the local matrix controlled by the condition of Chen, Chen, Chen, Yin, Zhang~\cite{chen2025rapid}.

	\subsection*{Main results}

	Given a graph $G=(V,E)$ and activity $\lambda$, the \emph{Glauber dynamics} for the hard-core model $\mu=\mu_{G,\lambda}$ is a Markov chain on the independent sets of $G$ defined as follows. In each 
	step, the chain picks an element $v\in V$ uniformly at random and resamples its state (occupied or unoccupied) according to the measure $\mu$ conditioned on the current state of all vertices in $V\backslash\{v\}$.
	The transition matrix $P_{\mathrm{GD}}$ satisfies, for
	$\sigmabf\neq \taubf$,
	\begin{align}\label{eq:Glauber}
		P_{\mathrm{GD}}(\sigmabf,\taubf)
		=
		\begin{cases}
			\displaystyle
			\frac{1}{|V|}\frac{\mu(\taubf)}{\mu(\sigmabf)+\mu(\taubf)},
			&\text{if }\sigmabf\triangle \taubf=\{v\}\text{ for some }v\in V,\\[3mm]
			0,
			&\text{otherwise.}
		\end{cases}
	\end{align}
	The \emph{mixing time} of the Glauber dynamics is defined as
	\[
	T_{\mathrm{mix}}(P_{\mathrm{GD}})
	=
	\max_{\sigmabf}
	\min\left\{
	t\ge 0 \,\middle|\,
	d_{\mathrm{TV}}\bigl(P_{\mathrm{GD}}^t(\sigmabf,\cdot),\mu\bigr)<\frac14
	\right\},
	\]
	where
	\(
	d_{\mathrm{TV}}(\mu,\nu)
	\)
	denotes the total variation distance.
	Our main goal is to give a short, self-contained proof of the following theorem of Chen, Chen, Chen, Yin, and Zhang~\cite[Theorem 1.2]{chen2025rapid}
	
	\begin{theorem} \label{thm:hard-core-model}
		Let $\delta \in (0,1)$ and consider Glauber dynamics for the hard-core model on a graph $G$ with $n$ vertices and activity $\lambda$.  If $$\lambda \leq \frac{1 - \delta}{-\lambda_{\min}(A_G) - 1}$$
		where $\lambda_{\min}(A_G)$ is the minimum eigenvalue of the adjacency matrix $A_G$ of $G$, then $T_{\mathrm{mix}}(P_{\mathrm{GD}}) \leq O(\frac{n^2}{\delta} \log \frac{1}{\lambda})\,.$
	\end{theorem}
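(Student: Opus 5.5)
We will prove a Poincaré inequality for a continuous-time birth--death chain and then transfer it to Glauber dynamics. Let $\Omega$ be the independent sets of $G$ and $\mu=\mu_{G,\lambda}$. Define the generator
\[
Lf(\sigmabf)=\sum_{v\notin\sigmabf,\ \sigmabf\cup\{v\}\in\Omega}\lambda\bigl(f(\sigmabf\cup\{v\})-f(\sigmabf)\bigr)+\sum_{v\in\sigmabf}\bigl(f(\sigmabf\setminus\{v\})-f(\sigmabf)\bigr).
\]
Births occur at rate $\lambda$ whenever $v$ is free, and deaths at rate $1$. This chain is reversible with respect to $\mu$. Its Dirichlet form is
\[
\mathcal E(f,f)=\E_\mu\Bigl[\sum_{v\in\sigmabf}\bigl(\nabla_v f(\sigmabf)\bigr)^2\Bigr],\qquad \nabla_vf(\sigmabf)=f(\sigmabf)-f(\sigmabf\setminus\{v\}).
\]
The goal is to show $\Var(f)\le C_\delta\,\mathcal E(f,f)$ with $C_\delta=O(1/\delta)$.

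The Bakry--Émery route is to establish $\E[(Lf)^2]\ge \delta\,\mathcal E(f,f)$. Since $\E[(Lf)^2]=\mathcal E(Lf,f)$ and $\mathcal E(f,f)=-\E[fLf]$, the spectral theorem then gives a gap of at least $\delta$.

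To prove this, we expand $\E[(Lf)^2]$ as a sum over ordered pairs of one-site moves. The plan is to use the recursive structure. For $v\in\sigmabf$, the map $\sigmabf\mapsto\nabla_v f(\sigmabf)$ is a function on the conditional family of independent sets containing $v$. That family is the hard-core model on $G-N[v]$. Commuting $L$ with $\nabla_v$ gives
\[
\nabla_v Lf=L^{(v)}\nabla_vf-\nabla_vf-\lambda\sum_{u\in N(v)\ \text{free in }\sigmabf\setminus\{v\}}\bigl(f(\sigmabf\setminus\{v\}\cup\{u\})-f(\sigmabf\setminus\{v\})\bigr).
\]
Here $L^{(v)}$ is the generator pinned at $v$, and the final term records the births that become available once $v$ is removed. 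After integrating against $\nabla_v f$, the identity $\mathcal E(Lf,f)=\E\sum_{v\in\sigmabf}\nabla_v Lf\,\nabla_vf$ splits into three pieces:
\begin{itemize}
\item the term $-\E\bigl[\nabla_vf\,L^{(v)}\nabla_vf\bigr]=\mathcal E^{(v)}(\nabla_vf,\nabla_vf)\ge 0$, which is the sum-of-squares term and is discarded;
\item the term $+\mathcal E(f,f)$;
\item an interaction term.
\end{itemize}

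We rewrite the interaction term by a change of measure $\sigmabf\to\sigmabf\setminus\{v\}$, using $\mu(\sigmabf)=\lambda\mu(\sigmabf\setminus\{v\})$. With $g_v(\taubf)=f(\taubf\cup\{v\})-f(\taubf)$ for $v$ free in $\taubf$, it becomes
\[
-\lambda\,\E_\mu\Bigl[\sum_{u\sim v,\ u,v\ \text{free}}g_u(\taubf)g_v(\taubf)\Bigr].
\]
This is a quadratic form $\lambda\,x^\top A_{F(\taubf)}x$ in the vector $x=(g_v(\taubf))_{v\in F(\taubf)}$, where $F(\taubf)$ is the set of free vertices and $A_{F(\taubf)}$ is the induced adjacency matrix. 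By Cauchy interlacing, $\lambda_{\min}(A_{F})\ge\lambda_{\min}(A_G)$, so
\[
-\lambda\,x^\top A_F x\le \lambda(-\lambda_{\min}(A_G))\|x\|^2.
\]
The same change of measure turns $\E_\mu\|x\|^2$ into $\lambda^{-1}\mathcal E(f,f)$. Collecting terms gives
\[
\E[(Lf)^2]\ge \mathcal E(f,f)\bigl(1-\lambda(-\lambda_{\min}(A_G)-1)\bigr)\ge\delta\,\mathcal E(f,f).
\]
The $-1$ inside the bracket comes from the diagonal self-interaction term (the $u=v$ birth/death cancellation). Getting these constants exactly right is the step I expect to be the main obstacle. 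It requires a careful bookkeeping of the diagonal terms when expanding $\nabla_v L f$, and of the extra birth rates at $u$, which must be counted once per neighbour $v$ removed. If the constants come out as $1-\lambda|\lambda_{\min}|$ instead, I would revisit the diagonal term in the $\sum_v$ of the death part.

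Finally, we compare Dirichlet forms. Glauber dynamics moves across the edge $\{\sigmabf,\sigmabf\setminus\{v\}\}$ with conductance
\[
\mu(\sigmabf)\,\frac{1}{n}\,\frac{1}{1+\lambda}.
\]
The birth--death chain has conductance $\mu(\sigmabf)\cdot 1$ across the same edge. Hence $\mathcal E_{\mathrm{GD}}=\mathcal E/(n(1+\lambda))$, and the Glauber spectral gap is at least $\delta/(n(1+\lambda))$. Under the hypothesis we have $\lambda\le 1-\delta$ whenever $-\lambda_{\min}\ge2$; the edgeless case is trivial. So $1+\lambda=O(1)$. Using the standard bound $T_{\mathrm{mix}}\le \mathrm{gap}^{-1}\log(4/\mu_{\min})$ together with $\log(1/\mu_{\min})=O(n\log(1/\lambda))$ (plus $O(n)$), we obtain $T_{\mathrm{mix}}=O\bigl(\tfrac{n^2}{\delta}\log\tfrac1\lambda\bigr)$. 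Laziness or negative eigenvalues can be handled by considering the lazy chain or by noting that Glauber's transition matrix is positive semidefinite.
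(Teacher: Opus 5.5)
Your strategy is the paper's, specialised to the hard-core model. Working with $\nabla_v$ at occupied sites is the same as the paper's $r_vD_v^+$ at free sites after the change of measure $\mu(\sigmabf)=\lambda\mu(\sigmabf\setminus\{v\})$, and your final Dirichlet-form comparison and mixing-time estimate are fine. But the central computation is wrong as written, and it is wrong exactly where the $-1$ in $-\lambda_{\min}(A_G)-1$ comes from.

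Your commutator identity is false. Already for $G$ a single vertex, $L^{(v)}$ is trivial and $\nabla_vLf=-(1+\lambda)\nabla_vf$, not $-\nabla_vf$. For $v\in\sigmabf$ and $\taubf=\sigmabf\setminus\{v\}$ the correct identity is
\[
\nabla_vLf(\sigmabf)=L^{(v)}\nabla_vf(\sigmabf)-(1+\lambda)\nabla_vf(\sigmabf)-\lambda\sum_{u\in N(v)\cap F(\taubf)}g_u(\taubf).
\]
The $-\nabla_vf$ you have comes from the death of $v$. The missing $-\lambda\nabla_vf$ comes from the birth part: it is the $u=v$ term $(D_v^+r_v)(\taubf)D_v^+f(\taubf)=-\lambda g_v(\taubf)$ in Lemma~\ref{lem:commutator}, since once $v$ is occupied it can no longer be born. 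In the paper this is the $I$ in $M_{\sigmabf}=\lambda\bigl((1-\delta)I+\lambda(I+A_{G^{\sigmabf}})\bigr)$. Without it, even after fixing the slips below, the diagonal coefficient is $1$ instead of $1+\lambda$. The argument then yields only $1-\lambda|\lambda_{\min}(A_G)|$. At $\lambda=(1-\delta)/(|\lambda_{\min}(A_G)|-1)$ this equals $(\delta|\lambda_{\min}(A_G)|-1)/(|\lambda_{\min}(A_G)|-1)$, which is strictly less than $\delta$ and negative once $\delta<1/|\lambda_{\min}(A_G)|$. So revisiting the death part, as you propose, would not locate the error.

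Two further slips compound this.

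First, $\E_\mu[(Lf)^2]=-\mathcal E(Lf,f)$, because $\mathcal E(g,f)=-\langle g,Lf\rangle_\mu$. Your three pieces (nonnegative pinned Dirichlet forms and $+\mathcal E(f,f)$) are consistent only with the minus sign.

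Second, after the change of measure the interaction term is $+\lambda^2\E_\mu[x^\top A_{F(\taubf)}x]$, not $-\lambda\E_\mu[\cdots]$. One factor of $\lambda$ comes from the birth rate in the commutator and one from $\mu(\sigmabf)=\lambda\mu(\taubf)$. The sign is not cosmetic. If the term entered with a minus, lower-bounding it would require $\lambda_{\max}(A_G)$, which gives only a uniqueness-type range $\lambda\lesssim 1/\Delta$. It is the plus sign that lets the most negative eigenvalue appear.

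With these corrections, use $\mathcal E(f,f)=\lambda\E_\mu\|x\|^2$ and interlacing to get
\[
\E_\mu[(Lf)^2]\ge(1+\lambda)\mathcal E(f,f)+\lambda^2\lambda_{\min}(A_G)\E_\mu\|x\|^2=\bigl(1-\lambda(-\lambda_{\min}(A_G)-1)\bigr)\mathcal E(f,f)\ge\delta\,\mathcal E(f,f).
\]
As submitted, this final inequality is asserted rather than derived from your displayed identities. The step you flagged as the main obstacle is exactly where the content of the theorem lies.
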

	Recall that if $G$ is random $\Delta$-regular graph on $n$ vertices, then $\lambda_{\min}(A_G)=-2\sqrt{\Delta-1} +o_n(1)$ with high probability~\cite{friedman2008proof}, in which case the above theorem applies for $\lambda\leq \frac{1-\delta}{2\sqrt{\Delta-1}-1}$. 
	
	As in \cite{chen2025rapid} we work in the more general setting where $\mu$ is a distribution that is fully supported on a downward-closed set family $\mathcal{X} \subseteq \{0, 1\}^V$ for $V$ finite. Note that we may define the Glauber dynamics for $\mu$ exactly as we did for hard-core model in~\eqref{eq:Glauber}.

	It will be convenient to also work with a \emph{continuous time} version of the Glauber dynamics. We will define dynamics that are reversible for $\mu$ by first specifying jump rates  
	$$
	\rplus{v} \coloneqq \begin{cases} \frac{\mu(\sigmabf \cup \{v\})}{\mu(\sigmabf)} & \text{if } v \notin \sigmabf \text{ and } \sigmabf \cup \{v\}  \in \mathcal{X} \\
		0 & \text{otherwise .}
	\end{cases} \,.
	$$
	For a given configuration $\sigmabf\in \mathcal{X}$ and a function $f:\{0,1\}^V \to \R$ define the operators
	$$
	(D^-_vf)(\sigmabf) = f(\sigmabf \setminus \{v\}) - f(\sigmabf) \quad \text{ and }\quad  (D^+_vf)(\sigmabf) = f(\sigmabf \cup \{v\}) - f(\sigmabf) \,.
	$$
	Continuous-time Glauber dynamics is then defined by the generator $$
	(Lf)(\sigmabf) = \sum_{v\in V} \left(r_v(\sigmabf) (D_{v}^+f)(\sigmabf) + (D_{v}^-f)(\sigmabf)\right) \,.
	$$ 
	The dynamics generated by $L$ may be described as follows: 
	when the chain is at a configuration
	$\sigmabf\in\mathcal X$, each occupied element $v\in \sigmabf$ dies, i.e. is removed from
	$\sigmabf$, at rate $1$.  Each unoccupied element $v\notin \sigmabf$ 
	is born, i.e. is added to $\sigmabf$, at rate
	$r_v(\sigmabf)$.

	For functions $f$ and $g$, we define the inner product $\langle f, g\rangle_\mu := \E_\mu[f(\sigmabf)g(\sigmabf)]$ where $\sigmabf \sim \mu$. The \emph{spectral gap} $\gamma$ of $L$ is defined by \begin{equation}\label{eq:spectral-gap-def}
		\gamma(L) = \inf_{f} \frac{\langle f, -Lf\rangle_\mu}{\text{Var}_\mu(f)}\,.
	\end{equation}
	where the infimum is over non-constant $f:\{0,1\}^V\to\mathbb{R}$.
	We prove the following version of \cite[Theorem~1.9]{chen2025rapid}.  
	\begin{theorem}\label{th:main}
		Let $\mathcal{X} \subseteq \{0,1\}^V$ be a non-empty downward closed set family and suppose $\mu$ is a distribution fully supported on $\mathcal{X}$.  Suppose that for $\delta \in (0,1)$ we have that for all $\sigmabf \in \mathcal{X}$ the matrix $M_{\sigmabf}\in \R^{V\times V}$ given by
		\begin{equation}\label{eq:PSD-assumption}
			M_{\sigmabf}(u,v)=(1 - \delta)r_v(\sigmabf)\one_{u = v} - r_v(\sigmabf)(r_u(\sigmabf\cup \{v\}) - r_u(\sigmabf))  
		\end{equation}
		is positive semidefinite. Then continuous time Glauber dynamics for $\mu$ has spectral gap at least $\delta$.
	\end{theorem}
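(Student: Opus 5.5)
The plan is a Bochner--Bakry--\'Emery argument. I would prove the integrated curvature inequality
\[
\E_\mu\bigl[(Lf)^2\bigr] \;\ge\; \delta\,\langle f,-Lf\rangle_\mu \qquad\text{for all } f:\{0,1\}^V\to\R .
\]
This suffices because $L$ is self-adjoint on $L^2(\mu)$ by reversibility. Applying the inequality to an eigenfunction $f$ with $-Lf=\gamma f$ and $f\perp \one$ gives $\gamma^2\|f\|^2\ge \delta\gamma\|f\|^2$, so every nonzero eigenvalue of $-L$ is at least $\delta$. By the spectral theorem and \eqref{eq:spectral-gap-def}, this gives $\gamma(L)\ge\delta$.

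\textbf{Step 1: Dirichlet form.} Detailed balance reads $\mu(\sigmabf)r_v(\sigmabf)=\mu(\sigmabf\cup\{v\})$. Using it, I would first record
\[
\langle g,-Lf\rangle_\mu=\sum_v \E_\mu\bigl[r_v(\sigmabf)(D_v^+g)(\sigmabf)(D_v^+f)(\sigmabf)\bigr].
\]
Taking $g=-Lf$ then gives
\[
\E_\mu[(Lf)^2]=-\sum_v \E_\mu\bigl[r_v\,(D_v^+f)(D_v^+Lf)\bigr].
\]
Only configurations with $v\notin\sigmabf$ and $\sigmabf\cup\{v\}\in\mathcal X$ contribute, since otherwise $r_v=0$.

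\textbf{Step 2: commutator identity.} Fix $v$ and set $g_v(\sigmabf)=(D_v^+f)(\sigmabf)$, viewed as a function on the pinned family $\mathcal X_v=\{\sigmabf: v\notin\sigmabf,\ \sigmabf\cup\{v\}\in\mathcal X\}$. This family is again downward closed. Let $L^{(v)}$ be the birth--death generator on $\mathcal X_v$, with death rate $1$ and birth rates $r_u(\sigmabf\cup\{v\})$ for $u\neq v$. This is exactly Glauber dynamics for the measure $\mu^{(v)}(\sigmabf)\propto\mu(\sigmabf\cup\{v\})\propto r_v(\sigmabf)\mu(\sigmabf)$. Expanding $Lf(\sigmabf\cup\{v\})-Lf(\sigmabf)$ term by term, I expect
\[
-(D_v^+Lf)(\sigmabf)= -(L^{(v)}g_v)(\sigmabf) + (1+r_v(\sigmabf))\,g_v(\sigmabf) - \sum_{u}\bigl(r_u(\sigmabf\cup\{v\})-r_u(\sigmabf)\bigr)g_u(\sigmabf).
\]
Here the $u=v$ summand is $-r_v(\sigmabf)g_v$, because $r_v(\sigmabf\cup\{v\})=0$. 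It combines with the death term at $v$ to produce the coefficient $1+r_v$. I would check this bookkeeping carefully, including the case where $u$ is blocked by $v$, so that $r_u(\sigmabf\cup\{v\})=0$ while $r_u(\sigmabf)>0$. This is the step I regard as the main obstacle: the sign and diagonal conventions must line up exactly with $M_{\sigmabf}$.

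\textbf{Step 3: integrate and use positivity.} I would multiply the identity by $r_v g_v$, take $\E_\mu$, and sum over $v$. The term $\sum_v\E_\mu[r_v\,g_v(-L^{(v)}g_v)]$ is a positive multiple of $\langle g_v,-L^{(v)}g_v\rangle_{\mu^{(v)}}$, so it is $\ge 0$ by reversibility of $L^{(v)}$ with respect to $\mu^{(v)}$; this is the discarded sum of squares. What remains is
\[
\E_\mu\Bigl[\sum_v r_v(1+r_v)g_v^2-\sum_{u,v} r_v\bigl(r_u(\sigmabf\cup\{v\})-r_u(\sigmabf)\bigr)g_vg_u\Bigr]
=\E_\mu\Bigl[\mathbf g^{\top}M_{\sigmabf}\mathbf g+\sum_v(\delta r_v+r_v^2)g_v^2\Bigr].
\]
Here $\mathbf g=(g_v(\sigmabf))_{v}$, and the quadratic form pairs $g_v$ with the row index of $M_{\sigmabf}$ as in \eqref{eq:PSD-assumption}; I would fix the transpose convention so that this matches. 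By hypothesis $\mathbf g^{\top}M_{\sigmabf}\mathbf g\ge0$, and $r_v^2g_v^2\ge0$. Hence
\[
\E_\mu[(Lf)^2]\ge\delta\sum_v\E_\mu[r_vg_v^2]=\delta\langle f,-Lf\rangle_\mu ,
\]
which is the desired inequality.
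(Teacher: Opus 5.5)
Your route is the paper's: integration by parts, the commutator with the pinned generator $L^{(v)}$, discarding the pinned Dirichlet form, and reading off the quadratic form of $M_{\sigmabf}$. However, the diagonal bookkeeping you flagged as the main obstacle is wrong as written, because the $u=v$ interaction term is counted twice. In $-D_v^+Lf(\sigmabf)$ the $u=v$ birth term contributes $+r_v(\sigmabf)g_v(\sigmabf)$ exactly once. You absorb it into the coefficient $1+r_v$. In Step~3 you also keep $u=v$ in $\sum_{u,v}r_v(D_v^+r_u)g_vg_u$, and since $D_v^+r_v=-r_v$, this diagonal contributes a second $+r_v^2g_v^2$. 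That is the source of the extra $\sum_v r_v^2g_v^2$ on your right-hand side.

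The resulting formula for $\E_\mu[(Lf)^2]$ already fails for one vertex. Take $\mathcal X=\{\emptyset,\{v\}\}$, $r_v(\emptyset)=\lambda$ and $g=f(\{v\})-f(\emptyset)$. Then $L^{(v)}=0$ and $\E_\mu[(Lf)^2]=\lambda g^2$, whereas your expression equals $\lambda(1+2\lambda)g^2/(1+\lambda)$. The slip is not harmless either. Your identity would yield $\E_\mu[(Lf)^2]\ge\delta\langle f,-Lf\rangle_\mu+\E_\mu\bigl[\sum_v r_v^2g_v^2\bigr]$, and this inequality is false. For the hard-core model on a single edge $\{1,2\}$, $M_{\sigmabf}$ is positive semidefinite for every $\lambda$ and $\delta$. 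Yet $f(\emptyset)=0$, $f(\{1\})=1$, $f(\{2\})=-1$ violates the inequality as soon as $\lambda>1-\delta$.

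The repair needs no new idea. The correct remainder is
\[
\E_\mu\Bigl[\sum_{v} r_v(1+r_v)g_v^2-\sum_{u\neq v}r_v(D_v^+r_u)g_ug_v\Bigr]=\E_\mu\bigl[\mathbf g^{\top}M_{\sigmabf}\mathbf g\bigr]+\delta\langle f,-Lf\rangle_\mu .
\]
This holds because $M_{\sigmabf}(v,v)=(1-\delta)r_v+r_v^2$ already contains the $r_v^2$, so nothing is left to discard. Equivalently, as in \cref{lem:commutator}, put only $-D_v^+f$ outside and keep $u=v$ inside $\sum_{u\in V}(D_v^+r_u)D_u^+f$. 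With this change your argument is the paper's proof.

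Two smaller points. The transpose convention is immaterial, since only the quadratic form enters. In the eigenfunction reduction you should note that $\ker L$ consists of the constants. This holds because every configuration reaches $\emptyset$ through deaths and $\mu$ has full support. Without it, ``every nonzero eigenvalue is at least $\delta$'' does not yet give $\gamma(L)\ge\delta$.
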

	
	We note that the matrix condition~\eqref{eq:PSD-assumption} is simply a reparameterisation the condition in \cite[Thm.~1.9]{chen2025rapid}. A standard comparison of Dirichlet forms shows that the spectral gap of the continuous-time Glauber dynamics gives a lower bound for the spectral gap of the discrete-time Glauber dynamics. Indeed, 
	\[
	\langle f,(I-P_{\mathrm{GD}})f\rangle_\mu
	=
	\frac{1}{|V|}
	\sum_{v\in V}
	\E_\mu\left[\frac{r_v(\sigmabf)}{1+r_v(\sigmabf)}
	(D_v^+f(\sigmabf))^2\right] \geq \sum_{v\in V} \frac{
		\E_\mu\left[r_v(\sigmabf)
		(D_v^+f(\sigmabf))^2\right]}{(1+r_{\max})|V|} =  \frac{\langle f,-Lf\rangle_\mu}{(1+r_{\max})|V|} .
	\]
	where $r_{\max}=\max_{\sigmabf\in\mathcal X, v\in V} r_v(\sigmabf)$ and the last equality is due to Lemma~\ref{lem:ibp} below.  
	It follows from~\eqref{eq:spectral-gap-def} that  
	\begin{align}\label{eq:gapcomp}
		\gamma(P_{\mathrm{GD}}-I)
		\ge
		\frac{1}{(1+r_{\max})|V|}\gamma(L) \,.
	\end{align}

	Our strategy for proving \cref{th:main} is to streamline  the proof of \cite[Thm.~4.2]{kondratiev2013spectral} which shows an analogous bound on the spectral gap for spatial birth-death dynamics of Gibbs point processes. Before turning to the proof, we deduce \cref{thm:hard-core-model} from \cref{th:main}.

	\begin{proof}[Proof of \cref{thm:hard-core-model}]
		We will apply \cref{th:main} with $\mu=\mu_{G,\lambda}$, the hard-core model on $G$ at activity $\lambda$.  Here $\mathcal{X}$ is the set of independent sets of $G$.  For an independent set $\sigmabf \in \mathcal{X}$, we let $G^{\sigmabf}$ denote the graph that remains when we remove $\sigmabf$ and its neighbors from $G$.  We note that $M_{\sigmabf} = \lambda\left(I(1 - \delta) + \lambda(I + A_{G^{\sigmabf}}) \right)$ where $A_{G^{\sigmabf}}$ is the adjacency matrix of $G^{\sigmabf}$.  By eigenvalue interlacing we note that $\lambda_{\min}(A_{G^{\sigmabf}}) \geq \lambda_{\min}(A_G)$.  Thus by \cref{th:main} we have $\gamma(L) \geq \delta$ and so $\gamma(P_{\mathrm{GD}}) \geq \delta/((1+\lambda)n)$ by~\eqref{eq:gapcomp}. 
		This shows $T_{\mathrm{mix}}(P_{\mathrm{GD}}) \leq (1+\lambda)n\delta^{-1}\log\left(4/\mu_{\min}\right)$ where $\mu_{\min}=\min_{\sigmabf} \mu(\sigmabf)$.  Bounding $\log(1/\mu_{\min}) \leq O(n \log(1/\lambda))$ completes the proof.
	\end{proof}

	\section{Proof of \texorpdfstring{\cref{th:main}}{Theorem 1.2}}
	It will be convenient to work with the following equivalent definition of the spectral gap. 
	\[
	\gamma(L) = \inf_{f} \frac{\langle Lf,Lf\rangle_\mu}{\langle f, -Lf\rangle_\mu}\,.
	\]
	The equivalence to~\eqref{eq:spectral-gap-def} follows from self-adjointness of $L$ which we prove now. More generally, we establish the following integration-by-parts identity. 
	
	\begin{lemma}\label{lem:ibp}
		For all $f,g:\{0,1\}^V\to\mathbb R$,
		\begin{equation}\label{eq:ibp}
			-\langle f,Lg\rangle_\mu
			=
			\sum_{v\in V}\E_\mu\left[r_v(\sigmabf)D_v^+f(\sigmabf)\,D_v^+g(\sigmabf)
			\right].
		\end{equation}
		In particular, $L$ is self-adjoint with respect to $\langle \cdot,\cdot\rangle_\mu$.
	\end{lemma}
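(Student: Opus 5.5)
The plan is to expand $\langle f, Lg\rangle_\mu$ directly from the definition of $L$ and split it into a birth part and a death part. The death part is re-indexed by a change of variables that turns each death move into a birth move. Detailed balance, encoded in the definition of $r_v$, then makes the two parts combine into the stated symmetric form.

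Concretely, write
\[
-\langle f, Lg\rangle_\mu = -\sum_{v\in V}\E_\mu\bigl[f(\sigmabf) r_v(\sigmabf) D_v^+g(\sigmabf)\bigr] - \sum_{v\in V}\E_\mu\bigl[f(\sigmabf) D_v^-g(\sigmabf)\bigr].
\]
Fix $v$ and handle the death term. In that term only configurations with $v\in\sigmabf$ contribute, since otherwise $D_v^-g(\sigmabf)=0$.

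For each such $\sigmabf$, set $\taubf=\sigmabf\setminus\{v\}$. Downward-closedness gives $\taubf\in\mathcal X$. Moreover $\taubf\cup\{v\}=\sigmabf\in\mathcal X$ and $v\notin\taubf$, so $r_v(\taubf)=\mu(\sigmabf)/\mu(\taubf)$, i.e.\ $\mu(\sigmabf)=\mu(\taubf)r_v(\taubf)$. The map $\sigmabf\mapsto\taubf$ is a bijection from $\{\sigmabf\in\mathcal X: v\in\sigmabf\}$ onto $\{\taubf\in\mathcal X: r_v(\taubf)>0\}$. Summing over $\taubf$ instead, we obtain
\[
-\E_\mu\bigl[f(\sigmabf)D_v^-g(\sigmabf)\bigr] = \sum_{\taubf}\mu(\taubf)r_v(\taubf)f(\taubf\cup\{v\})\bigl(g(\taubf\cup\{v\})-g(\taubf)\bigr) = \E_\mu\bigl[r_v(\taubf) f(\taubf\cup\{v\}) D_v^+g(\taubf)\bigr].
\]
Configurations with $r_v(\taubf)=0$ contribute nothing on either side, so nothing is lost in the change of variables.

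Adding this to the birth term $-\E_\mu[r_v f D_v^+g]$ gives $\E_\mu[r_v(\sigmabf)(f(\sigmabf\cup\{v\})-f(\sigmabf))D_v^+g(\sigmabf)]$, which is $\E_\mu[r_v D_v^+f\, D_v^+g]$. Summing over $v$ yields \eqref{eq:ibp}.

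Self-adjointness follows at once: the right-hand side of \eqref{eq:ibp} is symmetric in $f$ and $g$, so $\langle f,Lg\rangle_\mu=\langle Lf,g\rangle_\mu$.

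The only delicate point is the bookkeeping in the change of variables. One must check that the reindexing is a genuine bijection onto the support of $r_v(\cdot)$. In particular, $f(\taubf\cup\{v\})$ must only ever be evaluated at configurations in $\mathcal X$, which holds because $r_v(\taubf)>0$ forces $\taubf\cup\{v\}\in\mathcal X$; elsewhere the factor $r_v$ vanishes. This is where downward-closedness and full support of $\mu$ on $\mathcal X$ are used.
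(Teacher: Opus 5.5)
Your proof is correct and takes essentially the same route as the paper: you split $L$ into birth and death parts, reindex the death sum via $\taubf=\sigmabf\setminus\{v\}$ using $\mu(\taubf\cup\{v\})=\mu(\taubf)r_v(\taubf)$, and recombine the two parts into the symmetric form. You also check explicitly that the reindexing is a bijection onto the support of $r_v$, which relies on downward-closedness and full support; the paper leaves this check implicit.
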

	
	\begin{proof}
		Fix $v\in V$. Since $D_v^-g(\sigmabf)=0$ whenever $v\notin \sigmabf$ and $D_v^+g(\sigmabf)=0$ whenever $v\in \sigmabf$ we have
		\begin{align*}
			&\E_\mu\left[f(\sigmabf)\Bigl(r_v(\sigmabf)D_v^+g(\sigmabf)+D_v^-g(\sigmabf)\Bigr)\right] \\
			&=
			\sum_{\substack{\sigmabf\in\mathcal X\\ v\notin \sigmabf}}
			\mu(\sigmabf)f(\sigmabf)r_v(\sigmabf)\bigl(g(\sigmabf\cup\{v\})-g(\sigmabf)\bigr)
			+
			\sum_{\substack{\sigmabf\in\mathcal X\\ v\in \sigmabf}}
			\mu(\sigmabf)f(\sigmabf)\bigl(g(\sigmabf\setminus\{v\})-g(\sigmabf)\bigr)
		\end{align*}
		In the second sum, set $\taubf=\sigmabf\backslash\{v\}$. 
		Since
		\(
		\mu(\sigmabf)=\mu(\taubf \cup\{v\})=\mu(\taubf)r_v(\taubf)
		\)
		the above equals
		\begin{align*}
			&\sum_{\substack{\sigmabf\in\mathcal X\\ v\notin \sigmabf}}
			\mu(\sigmabf)f(\sigmabf)r_v(\sigmabf)\bigl(g(\sigmabf\cup\{v\})-g(\sigmabf)\bigr)
			+
			\sum_{\substack{\taubf\in\mathcal X\\ v\notin \taubf}}
			\mu(\taubf)r_v(\taubf) f(\taubf \cup\{v\})\bigl(g(\taubf)-g(\taubf \cup\{v\})\bigr) \\
			&=
			-\sum_{\sigmabf\in\mathcal X}
			\mu(\sigmabf)r_v(\sigmabf) D_v^+f(\sigmabf) D_v^+g(\sigmabf).
		\end{align*}
		Summing over $v\in V$ proves \eqref{eq:ibp}.
		The right-hand side of \eqref{eq:ibp} is symmetric in $f$ and $g$, so $L$ is self-adjoint. 
	\end{proof}

	For each $v\in V$, let
	\[
	\mathcal X^{(v)}:=\{\sigmabf\subseteq V\setminus\{v\}:\ \sigmabf\cup\{v\}\in \mathcal X\}
	\]
	and define the conditioned measure
	\[
	\mu^{(v)}(\sigmabf)
	:=
	\frac{\mu(\sigmabf\cup\{v\})}
	{\sum_{\taubf\in\mathcal X^{(v)}}\mu(\taubf\cup\{v\})},
	\qquad \sigmabf\in \mathcal X^{(v)}.
	\]
	The associated Glauber generator on $\mathcal X^{(v)}$ is
	\[
	(L^{(v)}h)(\sigmabf)
	:=
	\sum_{u\in V\setminus\{v\}}
	\left(
	r_u(\sigmabf\cup\{v\})\,D_u^+h(\sigmabf)+D_u^-h(\sigmabf)
	\right).
	\]
	Note that $(\mu^{(v)},L^{(v)})$ is of the same form as $(\mu,L)$, so Lemma~\ref{lem:ibp}
	applies to $L^{(v)}$ as well.
	
	The following `commutator identity' is key to our proof.
	
	\begin{lemma}\label{lem:commutator}
		Fix $v\in V$. For every $\sigmabf\in\mathcal X$ with $v\notin \sigmabf$,
		\begin{equation}\label{eq:commutator}
			D_v^+Lf(\sigmabf)
			=
			L^{(v)}(D_v^+f)(\sigmabf)
			-
			D_v^+f(\sigmabf)
			+
			\sum_{u\in V}(D_v^+r_u)(\sigmabf)\,D_u^+f(\sigmabf).
		\end{equation}
	\end{lemma}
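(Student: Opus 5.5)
The identity is a direct computation, so I will expand both sides term by term and match them. Fix $v\notin\sigmabf$ and write $\sigmabf^v:=\sigmabf\cup\{v\}$. If $\sigmabf^v\notin\mathcal X$, I will adopt the convention that $D_v^+$ of anything evaluated at $\sigmabf$ is interpreted through the definitions. In that case $r_v(\sigmabf)=0$ and the relevant terms vanish, so I will treat this boundary case separately at the end. In the main case $\sigmabf^v\in\mathcal X$, I expand
\[
D_v^+Lf(\sigmabf)=Lf(\sigmabf^v)-Lf(\sigmabf)=\sum_{u\in V}\Bigl(r_u(\sigmabf^v)D_u^+f(\sigmabf^v)-r_u(\sigmabf)D_u^+f(\sigmabf)+D_u^-f(\sigmabf^v)-D_u^-f(\sigmabf)\Bigr).
\]
I then split the sum into the term $u=v$ and the terms $u\neq v$.

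For $u=v$ the contributions are explicit. First, $r_v(\sigmabf^v)=0$ since $v\in\sigmabf^v$. Second, $D_v^-f(\sigmabf^v)=f(\sigmabf)-f(\sigmabf^v)=-D_v^+f(\sigmabf)$. Third, $D_v^-f(\sigmabf)=0$. Together these give $-r_v(\sigmabf)D_v^+f(\sigmabf)-D_v^+f(\sigmabf)$.

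For $u\neq v$ I will use the commutation of the discrete derivatives in distinct coordinates, $D_u^\pm D_v^+=D_v^+D_u^\pm$, which holds as functions on $\{0,1\}^V$. This yields the two identities
\[
D_u^+f(\sigmabf^v)-D_u^+f(\sigmabf)=D_u^+(D_v^+f)(\sigmabf),\qquad D_u^-f(\sigmabf^v)-D_u^-f(\sigmabf)=D_u^-(D_v^+f)(\sigmabf).
\]
Next I write $r_u(\sigmabf^v)D_u^+f(\sigmabf^v)-r_u(\sigmabf)D_u^+f(\sigmabf)=r_u(\sigmabf^v)\,D_u^+(D_v^+f)(\sigmabf)+(D_v^+r_u)(\sigmabf)\,D_u^+f(\sigmabf)$, which is just add-and-subtract $r_u(\sigmabf^v)D_u^+f(\sigmabf)$. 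Summing over $u\neq v$, the terms $r_u(\sigmabf^v)D_u^+(D_v^+f)+D_u^-(D_v^+f)$ assemble exactly into $L^{(v)}(D_v^+f)(\sigmabf)$, since the birth rates in $L^{(v)}$ are $r_u(\sigmabf\cup\{v\})$. The remaining terms give $\sum_{u\neq v}(D_v^+r_u)(\sigmabf)D_u^+f(\sigmabf)$.

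It remains to see that the leftover $-r_v(\sigmabf)D_v^+f(\sigmabf)$ from the $u=v$ term is exactly the missing summand $u=v$ of $\sum_{u\in V}(D_v^+r_u)(\sigmabf)D_u^+f(\sigmabf)$. This holds because $(D_v^+r_v)(\sigmabf)=r_v(\sigmabf^v)-r_v(\sigmabf)=-r_v(\sigmabf)$. Adding the extra $-D_v^+f(\sigmabf)$ then gives \eqref{eq:commutator}.

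The main point requiring care is bookkeeping rather than any real obstacle. The rates $r_u$ are defined only on $\mathcal X$ (or set to $0$ off it), and I need the commutation identities to hold when some sets such as $\sigmabf^v\cup\{u\}$ leave $\mathcal X$. Here I will note that $f$ is defined on all of $\{0,1\}^V$, and that whenever $\sigmabf^v\cup\{u\}\notin\mathcal X$ we have $r_u(\sigmabf^v)=0$, so such values never enter with nonzero weight. In the boundary case $\sigmabf^v\notin\mathcal X$, the statement is only needed where $r_v(\sigmabf)>0$, i.e.\ after integration against $r_v$. I will nonetheless check that the same algebra goes through formally with $\mu$-independent definitions of $D^\pm$ and rates extended by zero.
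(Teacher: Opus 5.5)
Your proposal is correct and matches the paper's proof. Like the paper, you split off the $u=v$ term, commute $D_v^+$ with $D_u^\pm$ for $u\neq v$, apply the discrete product rule to $r_uD_u^+f$, and identify the leftover $-r_v(\sigmabf)D_v^+f(\sigmabf)$ as the $u=v$ summand via $(D_v^+r_v)(\sigmabf)=-r_v(\sigmabf)$. Your separate treatment of $\sigmabf\cup\{v\}\notin\mathcal X$ is unnecessary, since none of your steps uses membership in $\mathcal X$: with $f$ on $\{0,1\}^V$ and rates extended by zero, every identity you use holds formally for all $\sigmabf$ with $v\notin\sigmabf$.
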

	
	\begin{proof}
		Fix $\sigmabf\in\mathcal X$ with $v\notin \sigmabf$. Expanding $D_v^+L f(\sigmabf)$ gives
		\begin{align}\label{eq:DplusL}
			D_v^+Lf(\sigmabf)
			&=
			D_v^+(r_vD_v^+f)(\sigmabf)+D_v^+D_v^-f(\sigmabf)+
			\sum_{\substack{u\in V\backslash\{v\}}}
			\Bigl(
			D_v^+(r_uD_u^+f)(\sigmabf)+D_v^+D_u^-f(\sigmabf)
			\Bigr)\, .
		\end{align}
		For $u\neq v$, observe that the discrete derivatives commute:
		\[
		D_v^+D_u^-f=D_u^-D_v^+f,
		\qquad
		D_v^+D_u^+f=D_u^+D_v^+f\, .
		\]
		Note also that since $v\notin \sigmabf$,
		\[
		D_v^+D_v^-f(\sigmabf)=-D_v^+f(\sigmabf)\, .
		\]
		For all $u,v$ we have
		\[
		D_v^+(r_uD_u^+f)(\sigmabf)
		=
		(D_v^+r_u)(\sigmabf)\,D_u^+f(\sigmabf)
		+
		r_u(\sigmabf\cup\{v\})\,D_u^+D_v^+f(\sigmabf).
		\]
		Setting $u=v$ in the above and recalling that $r_v(\taubf)=0$ if $v\in \taubf$ we obtain 
		\[
		D_v^+(r_vD_v^+f)(\sigmabf)=-r_v(\sigmabf)\,D_v^+f(\sigmabf)\, .
		\]
		Substituting these identities into~\eqref{eq:DplusL} yields \eqref{eq:commutator}.
	\end{proof}
	
	We deduce the following key Bochner-type inequality. 
	
	\begin{lemma}\label{lem:bochner}
		For every $f:\{0,1\}^V\to\mathbb R$,
		\begin{align*}
			\langle Lf,Lf\rangle_\mu
			\ge
			-\langle f,Lf\rangle_\mu
			-
			\E_\mu\!\left[
			\sum_{u,v\in V}
			r_v(\sigmabf)\,(D_v^+r_u)(\sigmabf)\,D_v^+f(\sigmabf)\,D_u^+f(\sigmabf)
			\right].
		\end{align*}
	\end{lemma}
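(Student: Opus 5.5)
Apply Lemma~\ref{lem:ibp} with $f$ replaced by $Lf$ and $g$ by $f$. This gives
\[
\langle Lf,Lf\rangle_\mu=-\langle Lf, L f\rangle_\mu\cdot(-1)= \sum_{v\in V}\E_\mu\bigl[r_v(\sigmabf)\,D_v^+(Lf)(\sigmabf)\,D_v^+f(\sigmabf)\bigr]\cdot(-1)\cdot(-1),
\]
that is,
\[
\langle Lf,Lf\rangle_\mu = -\langle Lf, Lf\rangle_\mu^{\text{(as }-\langle Lf,Lg\rangle\text{ with }g=f)}\!,
\]
or more cleanly $-\langle Lf, L f\rangle_\mu = \sum_v \E_\mu[r_v D_v^+(Lf) D_v^+ f]$ up to sign. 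We fix the sign carefully: by \eqref{eq:ibp}, $-\langle Lf,Lf\rangle_\mu=\sum_v\E_\mu[r_v\,D_v^+(Lf)\,D_v^+f]$. Hence $\langle Lf,Lf\rangle_\mu = -\sum_v \E_\mu[r_v\,D_v^+(Lf)\,D_v^+f]$. Since $r_v(\sigmabf)=0$ whenever $v\in\sigmabf$ or $\sigmabf\cup\{v\}\notin\mathcal X$, only configurations with $v\notin\sigmabf$ and $\sigmabf\cup\{v\}\in\mathcal X$ contribute. For these we may substitute the commutator identity of Lemma~\ref{lem:commutator}.

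Substituting splits the sum into three terms. The first is
\[
T_1=-\sum_v\E_\mu\bigl[r_v\,L^{(v)}(D_v^+f)\,D_v^+f\bigr].
\]
The second is
\[
T_2=+\sum_v\E_\mu\bigl[r_v(D_v^+f)^2\bigr]=-\langle f,Lf\rangle_\mu,
\]
again by Lemma~\ref{lem:ibp}. The third is
\[
T_3=-\E_\mu\Bigl[\sum_{u,v}r_v\,(D_v^+r_u)\,D_v^+f\,D_u^+f\Bigr],
\]
which is exactly the last term in the claimed inequality. So the whole proof reduces to showing $T_1\ge 0$. (If the signs come out opposite, the intended bookkeeping is that $T_2$ and $T_3$ appear with the signs in the statement; I would recheck the sign convention of \eqref{eq:ibp} here.)

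For $T_1$, rewrite the measure. For $\sigmabf$ with $v\notin\sigmabf$, we have $\mu(\sigmabf)r_v(\sigmabf)=\mu(\sigmabf\cup\{v\})=Z_v\,\mu^{(v)}(\sigmabf)$, where $Z_v=\sum_{\taubf\in\mathcal X^{(v)}}\mu(\taubf\cup\{v\})>0$, and the support is exactly $\mathcal X^{(v)}$. Thus, with $h_v:=D_v^+f$ viewed as a function on $\mathcal X^{(v)}$,
\[
T_1=-\sum_v Z_v\,\langle h_v, L^{(v)}h_v\rangle_{\mu^{(v)}} .
\]
Lemma~\ref{lem:ibp} applied to $(\mu^{(v)},L^{(v)})$ expresses each $-\langle h_v,L^{(v)}h_v\rangle_{\mu^{(v)}}$ as a nonnegative sum of squares, so $T_1\ge0$. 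Dropping $T_1$ yields the lemma.

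The main obstacle is verifying that $L^{(v)}(D_v^+f)(\sigmabf)$ in Lemma~\ref{lem:commutator} agrees with the generator $L^{(v)}$ acting on the restriction of $h_v$ to $\mathcal X^{(v)}$. Concretely, $L^{(v)}$ only uses values at $\sigmabf\pm u$; when $\sigmabf\cup\{u\}\notin\mathcal X^{(v)}$, the rate $r_u(\sigmabf\cup\{v\})$ vanishes. Downward closedness guarantees that deaths stay inside $\mathcal X^{(v)}$. I also need the rates of $\mu^{(v)}$ to be $r_u(\cdot\cup\{v\})$, which holds because the normalizing constants cancel. Once this is checked, the integration-by-parts identity applies verbatim and gives $T_1\ge0$.
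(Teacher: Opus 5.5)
Your proposal is correct and follows the paper's proof: integrate by parts to get $\langle Lf,Lf\rangle_\mu=-\sum_{v}\E_\mu[r_v\,D_v^+(Lf)\,D_v^+f]$, substitute the commutator identity, recognize the $-D_v^+f$ term as the Dirichlet form, and discard the $L^{(v)}$ term by rewriting $\mu(\sigmabf)r_v(\sigmabf)=\mu(\sigmabf\cup\{v\})$ and applying Lemma~\ref{lem:ibp} to $(\mu^{(v)},L^{(v)})$. Your first two displays are garbled (the first has the wrong sign), but the identity you then state and the signs of $T_1,T_2,T_3$ are right, so the closing hedge about signs is unnecessary.
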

	
	\begin{proof}
		By Lemma~\ref{lem:ibp},
		\[
		\langle Lf,Lf\rangle_\mu
		=
		\langle f,L^2f\rangle_\mu
		=
		-\sum_{v\in V}
		\E_\mu\!\left[
		r_v(\sigmabf)\,D_v^+f(\sigmabf)\,D_v^+Lf(\sigmabf)
		\right].
		\]
		Applying Lemma~\ref{lem:commutator} yields
		\begin{align*}
			\langle Lf,Lf\rangle_\mu
			&=
			\sum_{v\in V}
			\E_\mu\!\left[r_v(\sigmabf)\bigl(D_v^+f(\sigmabf)\bigr)^2\right] 
			-
			\E_\mu\!\left[
			\sum_{u,v\in V}
			r_v(\sigmabf)\,(D_v^+r_u)(\sigmabf)\,D_v^+f(\sigmabf)\,D_u^+f(\sigmabf)
			\right] \\
			&\hspace{0.5 cm}
			-
			\sum_{v\in V}
			\E_\mu\!\left[
			r_v(\sigmabf)\,D_v^+f(\sigmabf)\,L^{(v)}(D_v^+f)(\sigmabf)
			\right].
		\end{align*}
		The first sum is exactly $-\langle f,Lf\rangle_\mu$ by Lemma~\ref{lem:ibp}. It remains to show that the last sum is non-negative. For this set $h=D_v^+f$ and note that 
		\begin{align*}
			\E_\mu\left[
			r_v(\sigmabf)\,h(\sigmabf)\,L^{(v)}h(\sigmabf)
			\right]
			=
			\sum_{\sigmabf\in \mathcal{X}} \mu(\sigmabf)r_v(\sigmabf)h(\sigmabf)L^{(v)}h(\sigmabf)
			&=
			\sum_{\sigmabf\in \mathcal{X}^{(v)}} \mu(\sigmabf \cup \{v\})h(\sigmabf)L^{(v)}h(\sigmabf)\, .
		\end{align*}
		Writing $Z=\sum_{\taubf\in\mathcal X^{(v)}}\mu(\taubf\cup\{v\})$ and 
		applying Lemma~\ref{lem:ibp} to $(\mu^{(v)},L^{(v)})$ we see that the above is equal to
		\[
		Z \cdot \E_{\mu^{(v)}}
		\left[ h(\sigmabf) L^{(v)}h(\sigmabf)
		\right]=-Z\sum_{u \in V\backslash\{v\}} 
		\E_{\mu^{(v)}}\left[ 
		r_u(\sigmabf \cup\{v\})\bigl(D_u^+h(\sigmabf)\bigr)^2
		\right]\leq 0\, . \qedhere
		\]
	\end{proof}
	
	\begin{proof}[Proof of \cref{th:main}]
		Fix $f:\{0,1\}^V\to\mathbb R$, and for each configuration $\sigmabf\in \mathcal X$ and $v\in V$ define
		\(
		\psi_{\sigmabf}(v):=D_v^+f(\sigmabf).
		\)
		By Lemmas~\ref{lem:ibp} and~\ref{lem:bochner},
		\begin{align*}
			\langle Lf,Lf\rangle_\mu+\delta\langle f,Lf\rangle_\mu
			&\ge
			\E_\mu\!\left[
			\sum_{u,v\in V}
			\Bigl(
			(1-\delta)r_v(\sigmabf)\mathbf 1_{\{u=v\}}
			-
			r_v(\sigmabf)(D_v^+r_u)(\sigmabf)
			\Bigr)
			\psi_{\sigmabf}(u)\psi_{\sigmabf}(v)
			\right].
		\end{align*}
		Note  that the matrix inside the expectation is exactly the matrix from \eqref{eq:PSD-assumption}. By
		assumption, its quadratic form is nonnegative for every $\sigmabf$, and so this shows.
		\[
		\langle Lf,Lf\rangle_\mu+ \delta\langle f,Lf\rangle_\mu \geq 0\,. \qedhere 
		\]
	\end{proof}
	
	\section*{Acknowledgments}
	A.G. is funded by the Postdoc Network Brandenburg.
	M.J.\ is supported by a UK Research and Innovation Future Leaders Fellowship MR/W007320/2.  M.M.\ is supported in part by NSF grants DMS-2336788 and DMS-2246624. 
	M.P.\ is funded by the Deutsche Forschungsgemeinschaft (DFG, German Research Foundation) -- project number 390859508.
	W.P.\ supported in part by NSF grant DMS-2348743.

	\bibliography{references.bib}
	\bibliographystyle{abbrv}
	
\end{document}